 \newtheorem{thm}{Theorem}[section]
 \newtheorem{cor}[thm]{Corollary}
 \newtheorem{lem}[thm]{Lemma}
 \newtheorem{prop}[thm]{Proposition}
 \theoremstyle{definition}
 \newtheorem{rem}[thm]{Remark}
 \theoremstyle{definition}
 \newtheorem{ex}[thm]{Example}
\newtheorem{nota}[thm]{Notation}
 \newtheorem*{thmNONUMBER}{Theorem}
 \newcommand{\CC}{\mathbb{C}}
  \newcommand{\PP}{\mathbb{P}}
    \newcommand{\VV}{\mathcal{V}}
\begin{document}

\title[Expressing a General Form as a Sum of Determinants]
{Expressing a General Form as a Sum of Determinants}

\author[L. Chiantini ]{Luca Chiantini}
\address[L. Chiantini]{Universit\`a degli Studi di Siena, 
Dipartimento di Scienze Matematiche e Informatiche, 
Pian dei Mantellini, 44, 53100 Siena, Italy.}
\email{luca.chiantini@unisi.it}

\author[Anthony V. Geramita]{Anthony V. Geramita}
\address[A.V. Geramita]{DIMA - Dipartimento di Matematica  
Universit\`{a} di Genova, 16129 Genova, Italy. Department
of Mathematics, Queen's University, Kingston, Ontario
Canada}

\email{anthony.geramita@gmail.com}


\begin{abstract} 
{Let $A= (a_{ij})$ be a non-negative integer $k\times k$ matrix.  $A$ is a homogeneous matrix if $a_{ij} + a_{kl}=a_{il} + a_{kj}$ for any choice of the four indexes.

\noindent We ask: If $A$ is a homogeneous matrix and if $F$ is a form in $\CC[x_1, \dots x_n]$ with $deg(F) = {\rm trace}(A)$,  what is the least integer, $s(A)$, so that $F = detM_1 + \cdots + detM_{s(A)}$, where the $M_i = (F^i_{lm})$ are $k\times k$ matrices of forms and $deg F^i_{lm} =
a_{lm}$ for every $1\leq i \leq s(A)$?

We consider this problem for $n\geq 4$ and we prove that $s(A) \leq k^{n-3}$ and $s(A) <k^{n-3}$ in infinitely many cases.  However $s(A) = k^{n-3}$ when the integers in $A$ are large with respect to $k$.}

\end{abstract}


\date{\today}
\maketitle

\section*{Introduction}

{Let $F \in \CC[x_1, \ldots , x_n]$ be a general form and $A = (a_{ij})$  a square integer homogeneous matrix with the trace of $A$ (${\rm{tr}}(A)$)  equal to the degree of $F$ ($\deg F$).  In this paper we study representations of $F$ as a sum of determinants of matrices of type $M = (F_{ij})$ where $\deg F_{ij} = a_{ij}$.  

In case the number of variables is two then {\it any} form $F$ in $\CC[x_1,x_2]$ of degree $d$ decomposes as a product of linear forms.  It follows that if $A$ is any square homogeneous matrix of integers with no 
 negative entries and with ${\rm tr}(A) = d$, then $F$ is the determinant of a diagonal matrix whose degree matrix is $A$. 

In the case of 3 variables, the problem was considered classically by the great American mathematician L.E. Dickson (see \cite{D21}), who proved that a general form of degree $d$ is the determinant of a $d\times d$ matrix of linear forms.  The recent paper \cite{CM12}, of J. Migliore and the first author, generalizes this fact. Namely, for any given square homogeneous matrix of integers $A$ having trace $d$, there is a simple necessary and sufficient condition on $A$ which tells us when a general form of degree $d$ in three variables is the determinant of a matrix of forms whose degree matrix is $A$. 

Thus the case of 4 variables is the first non-trivial case not yet considered.
 We address the problem in the present paper. 
We prove our Main Theorem  (see below)   in the special case involving 
general forms in 4 variables 
and non-negative integer matrices $A$, in $\S 2$ .  
The proof for greater than 4 variables is in $\S 4$. }

This problem, of clear algebraic and geometric flavour, turns out to also 
have an interesting connection with some
applications in control theory. 
Indeed, if the algebraic boundary of a region $\Theta$
in the plane or in space, is described by the determinant
of a matrix of linear forms, then the study of systems
of matrix inequalities, whose domain is
$\Theta$, can be considerably simplified. We refer to the papers
\cite{V89} and \cite{HL12}, for an account of this theory.

 When the number of variables is bigger than $3$ one cannot
 hope to describe a general form of degree $d$ with just one determinant.
 For instance, it is a standard fact that a general
 form $F$ in four variables, of degree at least $4$, cannot
 be the determinant of a matrix of linear forms. In fact, if we 
 delete one row of the matrix, one sees that
 the surface $F=0$ should contain a curve cut by hypersurfaces of degree $d-1$.
This is impossible for general surfaces in $\PP^3$ since
 the celebrated Noether-Lefschetz Theorem prevents a general
 surface of degree $d>3$ from containing curves cut by surfaces of degree
 $d-1$.
 
As a consequence, we are led  to the following, quite natural,
 question: for a general form $F$ of degree $d$, and a
  given {\it homogeneous} square matrix of integers $A$, with
  degree (= trace) $d$, how many matrices of forms, with
  degree matrix $A$, are necessary so that $F$ is the
  {\it sum} of their determinants?

In a previous paper (\cite{CG12}), we showed that a general
form in four variables is the sum of two determinants 
of  $2\times 2$ matrices with given degree matrices.

When the size of the degree matrix $A$ grows, one cannot hope
to obtain a similar result, with the sum of just two determinants.
This is clear from a standard geometrical interpretation
of the problem.  The interpretation is based on the study
of secant varieties using the classical Terracini 
Lemma. Let us recall a standard construction, already used in 
\cite{CCG08} and in \cite{CG12}.   

\begin{ex}\label{basic}
Inside the projective space $\PP^N$, which 
parametrizes forms of degree $d$ in four variables (up to scalar
multiplication), the 
set of points representing forms which are the determinant
of a $k\times k$ matrix, whose degree matrix is fixed,
is dense in a projective subvariety $V$. 
Our question can be rephrased by asking: what is the
minimal $s$ such that a general point of $\PP^N$ is spanned by
$s$ points of $V$. In classical Algebraic Geometry,
(the closure of) the set of points spanned by $s$ points of
$V$, is called the {\it $s$-th secant variety} $S^s(V)$ of $V$.
Thus, we look for the minimal $s$ such that $S^s(V)=\PP^N$.

At a general point
$F=\det(G)\in V$, the tangent space to $V$ at $F$ corresponds
to forms of degree $d$ in the ideal $J$, generated by the submaximal minors of $G$.
If the matrix $A$ is $k\times k$, with all entries equal to $a$
(so that $d=ak$), then 
$J$ is generated by $k^2$ forms of degree $a(k-1)$.

By the celebrated Terracini Lemma, 
the tangent space at a general point $F$ of the
$s$-th secant variety is spanned by  $s$ secant spaces 
at the points $G_i\in V$, $i=1,\dots, s$, such that $F=\sum G_i$.
 
Thus, we want to know the minimal $s$ such that, for general matrices
$G_1,\dots, G_s\in V$ with degree matrix $A$, the ideal $I$, generated
by {\it all} their submaximal minors, coincides with the polynomial ring 
$R=\mathbb C[x,y,z,t]$, in degree $d$.

Just computing the dimensions as vector spaces, we see that
\begin{equation}\label{sharp}
\dim I_d\leq k^2s \dim R_a = k^2s a^3/6 + o(a^3)
\end{equation}
while the dimension of $R_d$ is $a^3k^3/6+o(a^3)$.

So, it is immediate to see that, at least when $a$ grows, 
if $I_d=R_d$ then $s$ must be asymptotically equal to $k$.
\end{ex}

We show that the bound of the previous rough estimate,
is always attained. Namely, we prove (see Theorem \ref{main} below):

\begin{thmNONUMBER} ({\bf Main})
Let $A$ be a homogeneous $k\times k$ matrix of \emph{non--negative} 
integers, with  $tr(A) = d$. Then a general form of degree $d$ in $4$
variables is the sum of $k$ determinants of matrices of forms each 
with degree matrix $A$. 
\end{thmNONUMBER}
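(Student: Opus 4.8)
The plan is to translate the statement into the language of secant varieties, exactly as in Example \ref{basic}, and then to establish the required surjectivity by a degeneration argument. First I would record the structural consequence of homogeneity: the relation $a_{ij}+a_{kl}=a_{il}+a_{kj}$ forces a splitting $a_{ij}=b_i+c_j$ for suitable integers $b_i,c_j$, with $\mathrm{tr}(A)=\sum_i(b_i+c_i)=d$. This additive structure is what will let me specialize the matrix entries to products in a controlled way. Next, let $V\subseteq\PP^N$ be the closure of the locus of forms $\det G$, where $G=(G_{ij})$ ranges over matrices of forms with $\deg G_{ij}=a_{ij}$; the Main Theorem is the assertion $S^k(V)=\PP^N$. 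By the Terracini Lemma, this holds provided that, for a general choice of $k$ matrices $G_1,\dots,G_k$ with degree matrix $A$, the tangent spaces to $V$ at the points $\det G_m$ together span $R_d$, where $R=\CC[x,y,z,t]$. Since the tangent space at $\det G$ is the degree-$d$ component of the ideal generated by the submaximal minors of $G$, the theorem reduces to the algebraic claim: for general $G_1,\dots,G_k$, the ideal $I$ generated by all submaximal minors of all the $G_m$ satisfies $I_d=R_d$.

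Second, I would exploit upper-semicontinuity: $\dim(R/I)_d$ can only jump up under specialization of the coefficients of the $G_m$, so it suffices to exhibit a single configuration $G_1,\dots,G_k$ for which $I_d=R_d$, and genericity then follows automatically. This reduces the whole problem to the construction of one explicit, computable example meeting the bound of the rough estimate \eqref{sharp}.

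Third, and this is the heart of the matter, I would build such a configuration by induction on $k$, using the three-variable results of Dickson and of \cite{CM12} as the base mechanism. The guiding principle is that a general plane section reduces the ambient $\PP^3$ to a $\PP^2$, where a single determinant already suffices; the $k$ summands are there precisely to account for the one extra, transversal direction that distinguishes $n=4$ from $n=3$. Concretely, I would specialize each $G_{ij}$ to a product adapted to the weights $b_i,c_j$ — for instance triangular or staircase matrices whose submaximal minors become monomials times lower-size determinants — and choose the $k$ matrices so that the supports of their cofactor ideals cover complementary graded pieces of $R_d$. The estimate \eqref{sharp} shows that $k$ is exactly the number of such pieces, so the construction has to be arranged to saturate this count with no loss.

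The main obstacle I anticipate is exactly this non-defectivity: proving that the $k$ specialized tangent spaces are in sufficiently general position that $\dim I_d$ attains its expected value $\dim R_d$, with no unexpected overlap among the contributions. The degeneration must simultaneously (i) remain inside the closure $V$ of honest determinants with the prescribed degree matrix $A$, and (ii) produce cofactor ideals transparent enough that $I_d$ can actually be computed. Balancing these two requirements — while also handling the degenerate features of $A$, such as vanishing entries $a_{ij}=0$ and the divisibility relations tying the weights $b_i,c_j$ to $d$ — is where the genuine work of the proof will lie.
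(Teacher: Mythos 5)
Your first two steps---the Terracini reduction (Proposition \ref{tgkxk}, Remark \ref{reduc}) and the appeal to semicontinuity---are correct and coincide with the paper's setup. But everything after that is a declaration of intent rather than a proof, and the part you defer is the entire content of the theorem. You propose induction on $k$ together with a specialization to ``triangular or staircase matrices whose submaximal minors become monomials times lower-size determinants,'' chosen so that the $k$ cofactor ideals ``cover complementary graded pieces of $R_d$''; you then concede that proving this covering is the main obstacle and that this is ``where the genuine work of the proof will lie.'' No such configuration is exhibited, and it is far from clear one exists along these lines: the count \eqref{sharp} is asymptotically tight, so a monomial-type degeneration has essentially no slack, while specializing entries to products adapted to the splitting $a_{ij}=b_i+c_j$ tends to give minors with large common factors, i.e.\ \emph{smaller} cofactor ideals, which works against you. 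So the proposal has a genuine gap: the mechanism that forces $I_d=R_d$ is missing.

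For comparison, the paper's engine is different in both the induction variable and the key lemma. The induction is on the degree $d=\mathrm{tr}(A)$, not on $k$: one passes from $A$ to the matrix $B$ obtained by subtracting $1$ from the first row, so the inductive hypothesis gives $k$ general matrices $M_i$ with $\partial M_i=B$ whose submaximal minors fill $R_{d-1}$. The step from degree $d-1$ to degree $d$ is then powered by a weak Lefschetz property (Proposition \ref{Lef}): if $J$ is the ideal of maximal minors of the $(k-1)\times k$ matrices $G_i$ obtained by deleting the first rows, then multiplication by a general linear form $L$ surjects $(R/J)_{d-1}\to (R/J)_d$; multiplying the first rows of the $M_i$ by $L$ then produces matrices with degree matrix $A$ whose minors fill $R_d$, and semicontinuity finishes. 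That surjectivity is exactly where the three-variable theorem of \cite{CM12} enters, via the technical Lemma \ref{tech}: restricting modulo a general linear form lands in $\CC[y,z,t]$, where the maximal minors of the restricted $G_i$ already generate everything in degrees $\geq T(A')$, so every class in $(R/J)_d$ is divisible by $x$. Your instinct that the $n=3$ results should drive the $n=4$ case is right, but without this Lefschetz-type surjectivity (or an equivalent substitute) your induction has no step: knowing the ideal fills one degree, or that the theorem holds for smaller $k$, gives you nothing in degree $d$ for the matrix $A$ itself.
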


The proof is based on an algebraic analysis of the ideal
generated by submaximal determinants. Essentially, we use
induction on the degree of $A$. A fundamental point in the proof is 
the fact that, by the main result of \cite{CM12}, the quotient $S$ 
of the polynomial ring $R$, by the
ideal generated by many submaximal minors, satisfies
a sort of {\it weak Lefschetz property}: multiplication by a general
linear form has maximal rank, in degrees close to $d$.

We notice that our result can also be interpreted as a result
for general surfaces with given degree $d$ in the projective space
$\PP^3$. The Hilbert--Burch Theorem shows that homogeneous
$(k-1)\times k$ matrices of forms determine the resolution
of ideals of curves which are {\it arithmetically Cohen--Macaulay}.

We first extend the idea of {\it trace} to matrices of size $(k-1)\times k$ by 
defining the trace of such a matrix to be the maximal trace of any square $(k-1)\times (k-1)$
submatrix.  We can now state our result in terms of surfaces containing 
curves of given type. 

\begin{cor}\label{mainc}
Let $A'$ be a homogeneous $(k-1)\times k$ matrix of \emph{non--negative} 
integers. Then a general surface of degree $d\geq tr(A') $ in $\PP^3$
is contained in a linear system generated by $k$ surfaces, each of 
 which contains an arithmetically Cohen--Macaulay curve
whose Hilbert--Burch matrix has degree matrix equal to $A'$.
\end{cor}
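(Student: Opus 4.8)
The plan is to deduce Corollary \ref{mainc} from the Main Theorem (Theorem \ref{main}) by reading the cofactor expansion of a square determinant through the lens of the Hilbert--Burch Theorem. First I would realize the given $(k-1)\times k$ degree matrix $A'$ as the matrix obtained from a square $k\times k$ homogeneous matrix $A$ by deleting one row, with $\mathrm{tr}(A)=d$. Since $A'$ is homogeneous we may write its entries as $a'_{ij}=r_i+c_j$ for $i=1,\dots,k-1$; adjoining a new $k$-th row of the form $a_{kj}=r_k+c_j$ keeps the enlarged matrix homogeneous, and the single free parameter $r_k$ (subject to $r_k\ge -\min_j c_j$ for non-negativity) can be increased in unit steps, so that $\mathrm{tr}(A)=\sum_i r_i+\sum_j c_j$ ranges over all integers $\ge$ its minimal value. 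An elementary computation, using the definition of $\mathrm{tr}(A')$ as the maximal trace of a square $(k-1)\times(k-1)$ submatrix, shows that this minimal value is attained by deleting the column with smallest $c_j$ and equals exactly $\mathrm{tr}(A')$. Hence for every $d\ge \mathrm{tr}(A')$ (after reordering columns so the minimizing column is the one we read off) we may choose such an $A$ with $A'$ its row-deletion and $\mathrm{tr}(A)=d$.

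Next I would apply the Main Theorem to $A$: a general form $F$ of degree $d$ in four variables can be written as $F=\det M_1+\dots+\det M_k$, where each $M_i$ is a $k\times k$ matrix of forms with degree matrix $A$. Deleting the $k$-th row of $M_i$ produces a $(k-1)\times k$ matrix $M_i'$ with degree matrix $A'$, and expanding $\det M_i$ along its $k$-th row expresses $\det M_i$ as a linear combination, with the entries of that row as coefficients, of the maximal minors of $M_i'$. By the Hilbert--Burch Theorem, provided these maximal minors have the expected codimension $2$, they generate the saturated ideal of an arithmetically Cohen--Macaulay curve $C_i\subset\PP^3$ whose Hilbert--Burch matrix is $M_i'$, so of degree matrix $A'$. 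The cofactor expansion then shows $\det M_i\in I(C_i)$, i.e.\ the surface $\det M_i=0$ contains $C_i$. Since $F=\sum_i\det M_i$ lies in the linear system $\langle \det M_1,\dots,\det M_k\rangle$ spanned by these $k$ surfaces, the Corollary follows.

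The hard part will be the genericity needed to guarantee that each $C_i$ is a genuine ACM curve, that is, that the maximal minors of $M_i'$ really cut out a codimension-$2$ locus so that Hilbert--Burch applies and produces an honest curve rather than a degenerate or empty scheme. The locus where these minors drop codimension is closed, so it suffices to show that the family of decompositions $F=\sum_i\det M_i$ supplied by the Main Theorem is not entirely contained in it; for a general $F$ one expects a generic decomposition to yield matrices $M_i'$ that are sufficiently general to be genuinely determinantal. Carrying out this openness/transversality argument carefully is where the real work lies, the remaining steps being the verification above that the minimal trace of the square extension equals $\mathrm{tr}(A')$ and the routine bookkeeping of the Laplace expansion.
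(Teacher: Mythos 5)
Your proposal is correct and is essentially the intended derivation: the paper presents Corollary \ref{mainc} as a direct reformulation of Theorem \ref{main}, obtained exactly as you do by completing $A'$ to a square homogeneous non-negative matrix $A$ of trace $d$ (possible for every $d\geq tr(A')$, as your bookkeeping with $a'_{ij}=r_i+c_j$ shows), decomposing a general form as $\sum_i \det M_i$ with $\partial M_i = A$, and reading each surface $\det M_i=0$, via Laplace expansion and the Hilbert--Burch Theorem, as containing the arithmetically Cohen--Macaulay curve cut out by the maximal minors of the row-deleted matrix $M_i'$. The genericity point you defer is routine within the paper's framework: by Remark \ref{reduc}, Theorem \ref{main} amounts to dominance of the map $(M_1,\dots,M_k)\mapsto\sum_i\det M_i$, so a general form admits a decomposition in which the $M_i$ lie in any prescribed dense open subset of the space of matrices with degree matrix $A$; in particular one may take them in the open locus where the maximal minors of each $M_i'$ have codimension $2$, so Hilbert--Burch applies and each $C_i$ is an honest ACM curve.
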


As we showed in the previous example, the conclusion of our Main Theorem cannot be 
 improved for certain matrices $A$ (see also Example \ref{3.5}). However,
we know that in some specific cases 
(e.g. when all the entries of $A$ are $1$'s, so that
we are looking at determinants of matrices of linear forms) the
number of determinants needed to write a general form
can be smaller than our bound $k$. See Remark \ref{sharprem} and \S \ref{impro}
for a discussion.
The problem of finding a {\it sharp}
 bound for the number of determinants needed to express
 a general form of small degree is still open.

 The extension of our Main Theorem to the case of $n > 4$ variables is in $\S 4$.

\section*{Preliminaries}

We work in the ring $R = \CC[x,y,z,t]$, i.e. the polynomial ring in 4 variables
with coefficients in the complex numbers.  By {\it quaternary form}, we mean any
homogeneous polynomial in $R$ and by $R_n$ we mean the vector space
of (quaternary) forms of degree $n$ in $R$.

By abuse of notation  we will often indicate with the same symbol $F$, 
both a form $F\in \CC[x,y,z,w]$ and the surfaces defined by {the} equation $F=0$.

Fix a degree $n$. The space $R_n$ of forms of degree $n$
has an associated projective space $\PP^N$ with 
$$N:=N(n)= \binom{n+3}3 -1.$$ 

For any choice of integers $a_{ij}$, $1\leq i,j\leq k$,
consider the numerical $k\times k$ matrix $A=(a_{ij})$.

We will say that a $k\times k$ matrix $M=(F_{ij})$, whose entries are 
(quaternary) forms, has {\it degree matrix $A$} if, for all
$i,j$, we have $\deg(F_{ij})=a_{ij}$. In this case, we will also write 
that $A=\partial M$.

Notice that when, for some $i,j$, we have $F_{ij}=0$, there are several 
possible degree matrices for $M$, since the zero polynomial is considered to have any degree. 
  \smallskip

Notice that the set of all matrices of forms whose degree matrix
is a fixed $A$, defines a vector space whose dimension 
is  $\sum \dim(R_{a_{ij}})$. From the geometrical point of
view, however, we will consider this set as the {\it product}
of projective spaces
$$ \VV(A) = \PP^{r_{11}}\times\cdots\times\PP^{r_{kk}}$$
where $r_{ij}=-1+\dim(R_{a_{ij}})$. 
\smallskip

We say that the numerical matrix $A$ is {\it homogeneous} when,
for any choice of the indexes $i,j,l,m$, we have
$$ a_{ij}+a_{lm} = a_{im}+a_{lj}.$$

All submatrices of a homogeneous matrix are homogeneous.

If a square matrix of forms $M$ has a homogeneous degree matrix, then
the determinant of $M$ is a homogeneous form. 
The degree of the determinant is the sum of the numbers on the main 
diagonal of $A$, i.e. $tr(A)$. This number is called the {\it degree} of the homogeneous 
square matrix $A$. 
\smallskip

In the {\it projective} space $\PP^N$, which parametrizes
all forms of degree $n$, we have the subset $U$ of all the forms
which are the determinant of a matrix of forms whose degree
matrix is a given $A$.  This set is a quasi-projective variety,
since it corresponds to the image of the map $\VV(A)\to\PP^N$,
which sends every matrix to its determinant (it is undefined
when the determinant is the polynomial $0$).

We will denote by $V$ the closure of the image of this map. 
As explained in the introduction, a general (quaternary) form $F$ 
of degree at least $4$ cannot be the determinant of a matrix of forms.
Thus, $V$ is not equal to $\PP^N$ when the degree of $A$
is at least $4$.\smallskip

In view of Terracini's Lemma (mentioned in the Introduction)
 we need to characterize the tangent space
to $V$ at a general point $F$.

\begin{prop}\label{tgkxk} Let $F$ be a general element in $V$, 
$F=\det M$, where $M=(F_{ij})$ is a $k\times k$ matrix of forms,
whose degree matrix is $A$.

Then, the tangent space to $V$ at $F$ coincides with
the subspace of $R_n/\langle F \rangle$, generated by the classes
of the forms of degree $n$ in the ideal $\langle F, M_{ij}\rangle$,
where the $M_{ij}$ are the submaximal minors of the matrix $M$. 
\end{prop}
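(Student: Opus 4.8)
The plan is to use the description of $V$ as the closure of the image of the map $\phi\colon \VV(A)\to\PP^N$ sending a matrix $M$ of forms to its determinant $\det M$, and to compute the image of the differential $d\phi$ at a general point. Since $F=\det M$ is a general point of $V$, the tangent space $T_FV$ is the closure of the image of $d\phi$ at the corresponding matrix $M=(F_{ij})$, viewed as a subspace of $R_n/\langle F\rangle$ (recall we work in $\PP^N$, so the tangent space lives in the quotient by the line through $F$).

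First I would parametrize a curve in $\VV(A)$ through $M$ by perturbing each entry: write $F_{ij}(\tau)=F_{ij}+\tau\, G_{ij}$, where each $G_{ij}\in R_{a_{ij}}$ is an arbitrary form of the correct degree $a_{ij}$, so that $(F_{ij}(\tau))$ has degree matrix $A$ for every $\tau$. Then I would differentiate $\det(M+\tau G)$ with respect to $\tau$ at $\tau=0$. The standard Jacobi formula for the derivative of a determinant gives
\begin{equation*}
\frac{d}{d\tau}\Big|_{\tau=0}\det\big(M+\tau G\big)=\sum_{i,j} G_{ij}\, C_{ij},
\end{equation*}
where $C_{ij}$ is the $(i,j)$ cofactor of $M$, that is, $\pm$ the submaximal minor $M_{ij}$ obtained by deleting row $i$ and column $j$. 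As the $G_{ij}$ range over all of $R_{a_{ij}}$, these derivatives sweep out exactly the degree-$n$ part of the ideal generated by the submaximal minors, since $\deg(G_{ij})+\deg(M_{ij})=a_{ij}+(d-a_{ij})=d=n$ by homogeneity of $A$. This shows the image of $d\phi$ is the space of degree-$n$ forms in $\langle M_{ij}\rangle$, and passing to $R_n/\langle F\rangle$ and including the class of $F$ itself yields the claimed description in terms of $\langle F, M_{ij}\rangle$.

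The main obstacle I anticipate is not the formal cofactor computation but justifying that the naive image of $d\phi$ actually equals the full tangent space to the variety $V$ at the general point $F$. This requires knowing that $F$ is a smooth point of $V$ and that $\phi$ is, up to the generic fiber, a submersion onto its image there, so that $\dim T_FV$ equals the rank of $d\phi$ at $M$; generic smoothness in characteristic zero handles this, but one must be careful that the map $\phi$ is defined at $M$ (i.e. $\det M\neq 0$) and that we pick $M$ generic in a fiber so no pathology inflates the kernel. I would also verify that the cofactors $C_{ij}$ and the submaximal minors $M_{ij}$ differ only by sign, so that the ideals $\langle M_{ij}\rangle$ and $\langle C_{ij}\rangle$ coincide, making the cofactor formula and the stated conclusion literally the same subspace of $R_n$.
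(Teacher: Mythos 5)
Your proof is correct and is essentially the paper's own argument: the paper performs the same first-order perturbation, phrased over the dual numbers $\CC[\epsilon]$, where $\det(F_{ij}+\epsilon G_{ij})=F+\epsilon\sum_{i,j}G_{ij}C_{ij}$ is exactly your Jacobi-formula computation of the differential of the determinant map. Your explicit attention to generic smoothness (identifying the image of $d\phi$ with the tangent space to $V$ at a general point, in characteristic zero) is the one step the paper leaves implicit, so it is a welcome addition rather than a departure.
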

\begin{proof}
This is just a direct computation. Namely, over the ring of dual
numbers $\CC[\epsilon]$ we want to find when the form $F+\epsilon\ G$
is the determinant of a matrix $(F_{ij}+\epsilon G_{ij})$,
where $\deg(G_{ij})=a_{ij}$.

A simple computation shows that this happens exactly when
$G$ sits in the ideal generated by $F$ and the $M_{ij}$'s. 
\end{proof}

\begin{rem}\label{reduc}  It follows immediately from the 
previous propositions, and Terracini's lemma, that:

- a general form of degree $n$ is the sum of $s$  
determinants of $k\times k$ matrices, all having degree matrix $A$ 
(i.e. the span of $s$ general tangent spaces to $V$
is the whole space $\PP^N$),

if and only if
  
- for a general choice of $s$ matrices of forms $M_1,\dots M_s$, of
type $k\times k$, with $\partial M_i=A$ for all $i$, the ideal generated 
by {\it all} the submaximal minors of all the $M_i$'s coincides, in degree $n$, with
the whole space $R_n$.
\end{rem}

\section{Some lemmas about $(k-1)\times k$ matrices of ternary forms}

Let $A'$ be a non-negative integer homogeneous matrix. By performing 
permutations of the rows and the columns of $A'$ we can always assume
that the integers in any row are
increasing as we move to the right and that the integers in any column 
are increasing as we go from bottom to top.  A non-negative integer
 homogeneous matrix whose rows and columns
satisfy the condition just described will be called {\it ordered}. Recall that
we defined the {\it trace} of a  (not necessarily square ) homogeneous matrix to be
the maximum of the traces of its square submatrices. 

In this section we collect results about the ideal generated by
the maximal minors of some
$(k-1)\times k$ non-negative integer homogeneous matrices of ternary forms,
with given degree matrices.

Let $A'=(a'_{ij})$ be such a $(k-1)\times k$ non-negative {\bf ordered}  
integer homogeneous matrix.
  Notice that the trace of $A'$ is equal to
 $$ tr(A')= a_{12}+a_{23}+\dots +a_{k-1\ k}.$$

\begin{rem}\label{HB}
Let $A'=(a_{ij})$ be as above. We will denote by $T(A')$ the number
$$T(A')=tr(A')+a_{11} = a_{11}+ a_{12}+a_{23}+\dots +a_{k-1\ k}.$$
 
 The number $T(A')$ has the following property: for a {\it general} matrix $G$
 of ternary forms, with $\partial G=A'$,  the Hilbert--Burch
theorem implies that the maximal minors of $G$
generate, in the ring $R':=\CC[x,y,z]$, the homogeneous ideal 
$I_{k-1}(G)$ of a set of points $Z\subset \PP^2$
(see the paper \cite{CGO88}, to which we refer for facts about
the Hilbert--Burch matrices of ternary forms).

The Betti numbers of a minimal free resolution  of $I_{k-1}(G)$ are fixed
by the degree matrix $A'$. The number $T(A')$ is exactly the maximal degree
of a syzygy appearing in the resolution of $I_{k-1}(G)$.

It is well known that the Hilbert function of $R'/I_{k-1}(G)$ is equal to the number 
of points in $Z$  for all degrees $n\geq T(A')-2$.  

Moreover,  a general linear form $L$ in $R'$ represents a line in $\PP^2$ 
which meets no point of $Z$. Thus the multiplication by $L$
is an isomorphism
$$ (R'/I_{k-1}(G))_{d-1}\to (R'/I_{k-1}(G))_d$$
whenever the degree $d$ is at least $T(A')-1$.  It follows 
that for any ideal $J\supset I_{k-1}(G)$, multiplication by 
a general linear form $L$
gives a surjective map $(R/J)_{d-1}\to (R/J)_d$ for $d\geq T(A')-1$.
\end{rem}

\begin{ex}  Let
$$ A'=\begin{pmatrix} 5 & 6 & 8 & 9\\ 5 & 6 & 8 & 9 \\ 
2 & 3 & 5 & 6\end{pmatrix}
$$
Since $tr(A') = 20$ we have $T(A')=25$.

For a {\it general} matrix $G$
 of ternary forms, with $\partial G=A'$ and $L$ a general linear form we thus have that the map
 $$ (R'/I_{k-1}(G))_{d-1}\to (R'/I_{k-1}(G))_d$$
  induced by $L$, is an isomorphism as soon as $d\geq 24$.
\end{ex}

We are now ready for our main Lemma.
In order to present its proof in a reasonable fashion we need some other 
pieces of notation. These extend the notation introduced in the previous section.

\begin{nota}
Let $B_1, B_2$ be ordered non-negative integer homogeneous matrices of size $k-1 \times k$.
We will say that 
$$ \mbox{ condition } M_s(B_1^j, B_2^{k-j}) \mbox{ holds }
$$
if for a general choice of $k$ matrices of ternary forms $G_1,\dots, G_k$
with $\partial G_i=B_1$ for all $i\leq j$ and $\partial G_i=B_2$ for all $i> j$, 
the ideal generated by all the
maximal minors $I_{k-1}(G_1)+\dots +I_{k-1}(G_k)$ coincides with
the ring $R':=\CC[x,y,z]$ in degree $s$.

When  $B_1=B_2$, we will write that condition
$M_s(B_1^k)$ holds.
\end{nota}

\begin{lem}  \label{tech}
With the previous notation,  condition
$M_{T(A')}((A')^k) $ holds. I.e. for a general choice of matrices of ternary forms 
$G_1,\dots, G_k$ with $\partial G_i=A'$ for all $i$, then the ideal generated by all the
maximal minors of the $G_i$ i.e. $I_{k-1}(G_1)+\dots +I_{k-1}(G_k)$ coincides with $R'$ in degree $T(A')$.
\end{lem}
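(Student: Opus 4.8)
The plan is to reformulate the statement geometrically and then establish it by a double induction, on the size $k$ and on the entry $a_{11}$ of $A'$, using the surjectivity of Remark \ref{HB} to control dimensions. For general $G_i$ the maximal minors $I_{k-1}(G_i)$ cut out a reduced set of points $Z_i\subset\PP^2$, and $\sum_i I_{k-1}(G_i)$ is the saturated ideal of the scheme-theoretic intersection $Z_1\cap\cdots\cap Z_k$, which is empty for a general choice; hence the sum is $\langle x,y,z\rangle$-primary and the only real content is that it already fills $R'$ in the precise degree $T:=T(A')$. Since $\dim(R'/I_{k-1}(G_1))_d$ is constant for $d\ge T-2$, the degree $T$ is essentially the first in which one can hope for equality, so the bound is sharp and there is no slack to exploit.

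For the inductive step I would compare $A'$ with the matrix $\bar A$ obtained by subtracting $1$ from every entry of its first column. A direct check shows that $\bar A$ is again a non-negative, ordered, homogeneous $(k-1)\times k$ matrix and that $T(\bar A)=T(A')-1$, since only the first column is affected while $tr(A')$ is unchanged; by the inductive hypothesis, condition $M_{T-1}(\bar A^{\,k})$ holds. To link the two I would \emph{specialize}, for each $i$, the first column of $G_i$ to $L\cdot c_i$, where $L$ is a single general linear form and $c_i$ is a general column of forms with the degrees prescribed by $\bar A$, keeping the remaining columns general of the degrees dictated by $A'$. Every maximal minor of $G_i$ that deletes a column other than the first then factors as $L$ times the corresponding maximal minor of the type-$\bar A$ matrix $\bar G_i$, whereas the single minor deleting the first column is unchanged. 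Writing $P=\sum_i\langle\text{minors of }\bar G_i\text{ deleting a column}\ge 2\rangle$ and $Q=\sum_i\langle\text{minor of }\bar G_i\text{ deleting column }1\rangle$, we obtain $\sum_i I_{k-1}(G_i)\supseteq L\,P+Q$, while $P+Q=\sum_i I_{k-1}(\bar G_i)$ equals $R'$ in degree $T-1$ by the inductive hypothesis.

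It then remains to deduce $(L\,P+Q)_{T}=R'_{T}$. Reducing modulo $L$ kills every generator of $P$, so the image of the sum in $(R'/L)_{T}\cong\CC[x,y]_{T}$ is spanned by the restrictions to the line $L=0$ of the $k$ "first-column" minors, which are binary forms of degree $tr(A')$. Granting that these restricted minors are general enough that $k$ of them generate $\CC[x,y]$ in degree $T=tr(A')+a_{11}$ (a one-variable count, since $k(a_{11}+1)\ge tr(A')+a_{11}+1$ in the relevant range), one gets $\CC[x,y]_T$ entirely from $Q$, hence $R'_T=(L\,P+Q)_T+L\,R'_{T-1}$. Because $Q$ is an ideal and $P+Q=R'$ in degree $T-1$, one has $L\,R'_{T-1}=L\,P_{T-1}+L\,Q_{T-1}\subseteq L\,P_{T-1}+Q_T=(L\,P+Q)_T$, and therefore $(L\,P+Q)_T=R'_T$. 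Semicontinuity of $\dim(R'/\sum_i I_{k-1}(G_i))_T$ along the flat family of matrices then transfers the conclusion from this special configuration to a general one, giving $M_T(A'^{\,k})$. The boundary case $a_{k-1,1}=0$, where the first column can no longer be lowered, is where a degree-$0$ (scalar) entry lets one row- and column-reduce $G_i$ to a genuinely smaller $(k-2)\times(k-1)$ matrix, feeding the induction on $k$; the initial case $k=2$ is a direct computation with two pencils of binary/ternary forms.

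I expect the main obstacle to be the generality claim used modulo $L$: the first-column minors are determinants of the structured right blocks of the $\bar G_i$, not independent general forms, so one must prove that their restrictions to a general line are general enough to generate $\CC[x,y]_T$. This is precisely where the weak Lefschetz surjectivity of Remark \ref{HB} and the careful choice of $L$ (general with respect to the points $Z_1$) must be invoked, and where semicontinuity is needed to rule out a dimension drop under the specialization. Controlling this restriction — equivalently, showing the specialization does not lower $\dim(R'/\sum_i I_{k-1}(G_i))_T$ below its generic value — is the crux; the iteration of this one-step degeneration across the matrices is exactly what the mixed condition $M_s(B_1^{\,j},B_2^{\,k-j})$ in the preceding notation is designed to bookkeep.
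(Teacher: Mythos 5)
Your reduction modulo $L$ is set up correctly: since $(LP+Q)_T\supseteq L(P+Q)_{T-1}=L\,R'_{T-1}$ by your inductive hypothesis, filling degree $T$ is indeed equivalent to the image of $Q$ filling $(R'/\langle L\rangle)_T\cong\CC[x,y]_T$. But this is exactly where the argument collapses, and not merely for lack of a generality lemma. Because you specialize \emph{all} $k$ matrices at once, modulo $L$ the entire ideal is generated by just the $k$ column-one-deleted minors, which are forms of degree $tr(A')$; hence its dimension in degree $T=tr(A')+a_{11}$ is at most $k(a_{11}+1)$, whereas $\dim\CC[x,y]_T=tr(A')+a_{11}+1$. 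Your inequality $k(a_{11}+1)\geq tr(A')+a_{11}+1$ is equivalent to $tr(A')\leq(k-1)(a_{11}+1)$, and this fails for many admissible matrices: for instance
\[
A'=\begin{pmatrix} 1 & 10 & 10\\ 1 & 10 & 10\end{pmatrix}
\]
is ordered, homogeneous and non-negative with $k=3$, $tr(A')=20$, $a_{11}=1$, $T=21$, and you would need $6\geq 22$. For such $A'$ (and the Lemma does hold for them, as the paper shows) your specialized configuration \emph{provably} fails to fill degree $T$, and semicontinuity runs only one way: it promotes success on a special fiber to the general fiber, but it can never excuse failure on the special fiber. So no strengthening of the claim that the restricted minors are ``general enough'' can repair the step; the specialization itself is too degenerate. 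Moreover, since your descent lowers $a_{11}$ while $tr(A')$ stays fixed, the numerical obstruction only gets worse as the induction proceeds.

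The paper's degeneration differs in precisely the two respects your argument is missing. First, it multiplies a \emph{row} (not a column) of a matrix by $L$, so that \emph{every} maximal minor acquires the factor $L$ (maximal minors of a $(k-1)\times k$ matrix use all rows), giving the clean identity $I_{k-1}(G^L)=L\,I_{k-1}(G)$. Second, and crucially, at each step it leaves at least one matrix $G_j$ completely general and untouched, and applies Remark \ref{HB} to it: multiplication by $L$ is surjective on $R'/I_{k-1}(G_j)$ in the relevant degree, which says exactly that the whole point-set ideal $I_{k-1}(G_j)$ --- not a mere $k$ forms --- fills $(R'/\langle L\rangle)$ there. The untouched matrix absorbs everything modulo $L$, and the multiplied matrices only need to fill $R'/I_{k-1}(G_j)$ one degree lower, which is the inductive hypothesis. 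The induction is organized on the diameter of $A'$, adjusting one row of the degree matrix at a time (this is what the mixed conditions $M_s(B_1^j,B_2^{k-j})$ bookkeep), and its base case (all rows of $A'$ equal) is supplied by the theorem of \cite{CM12}, by adding a row to form a square matrix and slicing it. Note that your proposal never invokes \cite{CM12} at all, although it is the indispensable external input of the paper's proof; any corrected version of your column-based induction would still need both a genuine base case of this kind and an untouched ``absorbing'' matrix in each step.
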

\begin{proof}

It's enough to exhibit $k$ matrices with the desired property which, by semi-continuity, implies the result 
for a generic choice.

First assume that all the rows of $A'$ are equal. 
In this case, the Lemma follows from the main result of \cite{CM12}.
To see this, we add a new row to $A'$, equal to all the 
other rows of $A'$.  
We get a square $k\times k$ ordered homogeneous matrix of non-negative integers,
which we denote by $A$. Notice that $tr(A) = T(A')$.

By \cite{CM12}, Theorem 3.6, we know that a general ternary form
of degree equal to the trace of $A$, is the determinant of a matrix
of forms $G$, with $\partial G=A$. This implies  by 0.3 and 0.4) that the ideal generated
by all the $(k-1)\times (k-1)$ minors of $G$ coincides with $R'$ in degree $T(A')$. 
If we take $G_i=$ the matrix $G$ with the $i$-th row canceled, we thus have an instance
of $k$ matrices with $\partial G_i=A'$, whose minors generate $R'_n$, 
for all $n\geq T(A')$. Thus the Lemma is true for the $k$ matrices coming
from $G$ by erasing  one row at a time.  Thus the Lemma
is true for a general set of $k$ matrices of size $(k-1)\times k$, 
all having degree matrix $A'$.

Now, let us consider the general case.
Since $A'$ is homogeneous and ordered, the $i$-th row of $A'$ is obtained from the last
row of $A'$ by adding a fixed non-negative integer to every entry. We define the
{\it diameter} $d(A')$ of $A'$ to be the (constant) difference between the entries
in the first row of $A'$ and the entries in the last row.

We do induction on $d(A')$, and notice that the case $d(A')=0$ is 
exactly the case where all the rows of $A'$ are equal.

Assume that $d(A')>0$ and the Lemma is true for all matrices with diameter 
smaller than $d(A')$.
Let $m\geq 1$ be the number of rows of $A'$ which are equal to the first row.
Then by subtracting $1$ from the entries in the first $m$ rows of $A'$  
we get a new matrix $A''$ which is still an ordered non-negative integer
homogeneous matrix, with diameter $d(A'')=d(A')-1$.
Then, by induction, the Lemma holds for $A''$, i.e. $M_{T(A'')}((A'')^k)$ holds. 
Notice that $T(A'')=T(A')-(m+1).$

For $j=0,\dots,m$ call $A_j$ the matrix obtained by adding $1$ to the entries in the
first $j$ rows of $A''$. Each $A_j$ is again an ordered non-negative integer 
homogeneous matrix. Moreover $A_0=A''$ and $A_m=A'$ and $T(A_j)=T(A'')+j+1$ for $j>0$.
We will prove by induction that condition
$M_s((A_{j-1})^j,(A_j)^{k-j})$ holds for $j=1,\dots,m$ and $s=T(A'')+j=T(A_j)-1$.

For $j=1$, we prove that $M_{T(A'')}((A'')^k)$ implies
$M_{T(A'')+1}(A_0,A_1^{k-1})$. Since $M_{T(A'')}((A'')^k)$
is true, then if we take $k$ general matrices
$G_1,\dots, G_k$ with $\partial G_i=A''$ for all $i$,
and call $S$ the quotient $S=R'/I_{k-1}(G_1)$, 
then we have that the image of the  ideal $I_{k-1}(G_2)+
 \dots + I_{k-1}(G_k)$ fills $S$ in degree $T(A'')$. Moreover, by Remark \ref{HB},
we know that the multiplication by a general linear form
gives a surjection $S_{T(A'')}\to S_{T(A'')+1}$. For
$i=2,\dots, k$ call $G_i^L$ the matrix obtained by multiplying
the entries of the first row of $G_i$ by a general linear form $L$. 
The $(k-1)\times (k-1)$ minors of each $G^L_i$ are the $(k-1)\times (k-1)$
minors of $G_i$ multiplied by $L$. Thus
 $I_{k-1}(G^L_2)+ \dots + I_{k-1}(G^L_k)$ is equal to 
 $L(I_{k-1}(G_2)+  \dots + I_{k-1}(G_k))$ and therefore its image
 fills $S_{T(A'')+1}$. It follows that $R'$ is equal
 to $I_{k-1}(G_1)+I_{k-1}(G^L_2)+ \dots + I_{k-1}(G^L_k)$ in degree $T(A'')+1$.
 Thus  we have a particular set
 of matrices $G_1,G^L_2,\dots, G^L_k$ with $\partial G_1=A''=A_0$ and
 $\partial (G^L_i)=A_1$, such that all their maximal minors generate
 an ideal which coincides with $R'$ in degree $T(A'')+1$. 
 By semicontinuity, we see that $M_{T(A'')+1}(A_0,A_1^{k-1})$ holds.
 
In an analogous way, for $1< j\leq m$ one proves  that $M_{T(A'')+j-1}(A_{j-2}^{j-1},
A_{j-1}^{k-j+1})$ implies $M_{T(A'')+j}(A_{j-1}^j,A_j^{k-j})$. Namely, take $k$ 
general matrices $G_1,\dots, G_k$ with $\partial G_i=A_{j-2}$ for $i=1,\dots, j-1$ and
$\partial G_i=A_{j-1}$ for $i\geq j$.
Call $S'$ the quotient $S'=R'/I_{k-1}(G_j)$. Now
$M_{T(A'')+j-1}(A_{j-2}^{j-1}, A_1^{k-j+1})$ implies that the image of the  ideal 
$I_{k-1}(G_1)+ \dots + I_{k-1}(G_{j-1})+ 
I_{k-1}(G_{j+1})\dots +I_{k-1}(G_k)$ fills $S'$ in degree $T(A'')+j-1$. Moreover, 
since $T(A_{j-1})=T(A'')+j$,
by Remark \ref{HB}, we know that the multiplication by a general linear form
gives a surjection $S'_{T(A'')+j-1}\to S'_{T(A'')+j}$. For
$i=1,\dots, j-1$ call $G_i^L$ the matrix obtained by multiplying
the entries of the $(j-1)$-th row of $G_i$ by a general linear form $L$. 
For $i=j+1,\dots, k$ call $G_i^L$ the matrix obtained by multiplying
the entries of the $j$-th row of $G_i$ by the same general linear form $L$.
The $(k-1)\times (k-1)$ minors of each $G^L_i$ are the $(k-1)\times (k-1)$
minors of $G_i$ multiplied by $L$. Thus
 $I_{k-1}(G^L_1)+ \dots I_{k-1}(G^L_{j-1})+I_{k-1}(G^L_{j+1})+ \dots +I_{k-1}(G^L_k)$ is equal to 
 $L(I_{k-1}(G_1)+ \dots I_{k-1}(G_{j-1})+I_{k-1}(G_{j+1})+ \dots +I_{k-1}(G_k))$ and therefore its image
 fills $S'_{T(A'')+j}$. It follows that $R'$ is equal
 to $I_{k-1}(G^L_1)+ \dots I_{k-1}(G^L_{j-1})+I_{k-1}(G_j)+I_{k-1}(G^L_{j+1})+ \dots +I_{k-1}(G^L_k)$
 in degree $T(A'')+j$.
Thus  we have a particular set
 of matrices $G^L_1,\dots, G^L_{j-1},G_j, G^L_{j+1},\dots, G^L_k$ 
  such that all their maximal minors generate
 an ideal which coincides with $R'$ in degree $T(A'')+j$.
 Notice that $\partial G^L_i=A_{j-1}$ when $i=1,\dots, j-1$, $\partial G_j=A_{j-1}$
  and $\partial G^L_i=A_j$ for $i>j$.
 Thus, by semicontinuity, we see that $M_{T(A'')+j}(A_{j-1}^j,A_j^{k-j})$ holds.
 
After $m$ steps, we get that $M_{T(A'')}((A'')^k)$ implies $M_{T(A'')+m}(A_{m-1}^m,A_m^{k-m})$. 
It remains to show that $M_{T(A'')+m}(A_{m-1}^m,A_m^{k-m})$ implies 
$M_{T(A')}((A')^k)$.

Take general matrices of forms $G_1,\dots, G_k$ with $\partial G_i=A_{m-1}$ 
for $i=1,\dots, m$ and $\partial G_i=A_m=A'$ for $i\geq m+1$ (recall that $k>m$). 
Call $S''$ the quotient of $R'$ by the ideal
$I_{k-1}(G_{m+1})+\dots +I_{k-1}(G_k)$. Since $S''$ is a quotient of
$R'/I_{k-1}(G_k)$ and $\partial G_k=A_m=A'$,  by Remark \ref{HB}
we know that the multiplication by a general linear form
gives a surjection $S''_{T(A'')+m}\to S''_{T(A'')+m+1}$.  Now
$M_{T(A'')+m}(A_{m-1}^m,A_m^{k-m})$ implies that the image of the  ideal 
$I_{k-1}(G_1)+ \dots + I_{k-1}(G_{m-1})$ fills $S''$ in degree $T(A'')+m$. 
 For $i=1,\dots, m$ call $G_i^L$ the matrix obtained by multiplying
the entries of the $m$-th row of $G_i$ by a general linear form $L$. 
The $(k-1)\times (k-1)$ minors of each $G^L_i$ are the $(k-1)\times (k-1)$
minors of $G_i$, multiplied by $L$. Thus
 $I_{k-1}(G^L_1)+ \dots I_{k-1}(G^L_m)$ is equal to 
 $L(I_{k-1}(G_1)+ \dots I_{k-1}(G_m))$  and therefore its image
 fills $S''$ in degree $T(A'')+m+1=T(A')$. It follows that $R'_{T(A')}$ is equal
 to $I_{k-1}(G^L_1)+ \dots I_{k-1}(G^L_m)+I_{k-1}(G_{m+1})+\dots +I_{k-1}(G_k)$.
Thus  we have a particular set
 of matrices $G^L_1,\dots, G^L_m,G_{m+1},\dots, G_k$ 
  such that all their maximal minors generate
 an ideal which coincides with $R'$ in degree $T(A')$.
 Notice that now $\partial G^L_i=A'$ when $i=1,\dots, m$, and
 also  $\partial G_i=A'$ for $i>m$.
 Thus, by semicontinuity, we see that $M_{T(A')}((A')^k)$ holds.
 \end{proof}  

\begin{ex} We give an explicit description of the previous argument,
for a particular   $3\times 4$ matrix.

Assume we want to know that the maximal minors of $k=4$ general
matrices, with degree matrix
$$ A'=\begin{pmatrix} 5 & 6 & 8 & 9\\ 5 & 6 & 8 & 9 \\ 
2 & 3 & 5 & 6\end{pmatrix}
$$
generate the ring $R'$ in degree $T(A')=25$. I.e. we want to show that
$M_{25}((A')^4)$ holds. 
$A'$ is an ordered homogeneous matrix of non-negative integers,
with diameter $3$. The first two rows of $A'$ are equal so, in the notation of Lemma
\ref{tech}, $m=2$.
Thus in order to decrease the diameter, we need to subtract $1$ from the
first two rows. We obtain, in this way, the matrix
$$
 A''=\begin{pmatrix} 4 & 5 & 7 & 8\\ 4 & 5 & 7 & 8 \\ 
2 & 3 & 5 & 6\end{pmatrix}.
$$
which is still ordered and whose diameter is $2$. Since $T(A'')=22$,
we may assume by induction that $M_{22}((A'')^4)$ holds.

We will need the auxiliary matrix 
$$ A_1=\begin{pmatrix} 5 & 6 & 8 & 9\\ 4 & 5 & 7 & 8\\ 
2 & 3 & 5 & 6\end{pmatrix}\
$$
for which $T(A_1)=24$. Following the proof of the Lemma, we show
that $M_{22}((A'')^4)$ implies $M_{23}((A'')^1,A_1^3)$ which in turn implies
$M_{24}(A_1^2,(A')^2)$, which finally implies $M_{25}((A')^4)$. 

Indeed, take $4$ general matrices $G_1,G_2,G_3,G_4$ whose degree matrix is $A''$.
By $M_{22}((A'')^4)$, we know that $I_3(G_1)+\dots + I_3(G_4)$ fills $R'$
in degree $22$. Moreover the multiplication by a general linear form
gives an isomorphism $(R'/I_3(G_1))_{22}\to (R'/I_3(G_1))_{23}$.
Thus if $G^L_i$, $i=2,3,4$ is the matrix obtained from $G_i$ by multiplying
the first row by a general linear form $L$, we see that $\partial G^L_i=A_1$ and
$I_3(G_1)+I_3(G^L_2)+I_3(G^L_3)+I_3(G^L_4)$ coincides with $R'$ in degree $23$.
Thus, by semicontinuity, $M_{23}((A'')^1,A_1^3)$ holds.
Now take new general matrices $H_1,H_2,H_3,H_4$ with $\partial H_1=A''$ and
$\partial H_2=\partial H_3=\partial H_4=A_1$, so that, by $M_{23}((A'')^1,A_1^3)$, the ideal
$I_3(H_1)+\dots +I_3(H_4)$ coincides with $R'$ in degree $23$.
Since $\partial H_2=A_1$ and $T(A_1)=24$, the multiplication by a general 
linear form determines an isomorphism $(R/I_3(H_2))_{23}\to (R/I_3(H_2))_{24}$.
Take a general linear form $X$. Call $H_1^X$ the matrix obtained by multiplying
the first row of $H_1$ by $X$, and call $H_3^X$ (resp. $H_4^X$) 
the matrix obtained by multiplying the second row of $H_3$ (resp. $H_4$) by $X$.
Since $I_3(H^X_1)+I_3(H_3^X)+I_3(H^X_4)=X(I_3(H_1)+I_3(H_3)+I_3(H_4))$, then
$I_3(H^X_1)+I_3(H_2)+I_3(H_3^X)+I_3(H^X_4)$ coincides with $R'$ in degree $24$.
Notice that $\partial H_1^X=\partial H_2=A_1$ while $\partial H^X_3=\partial H_4^X=A'$.
Thus, by semicontinuity, $M_{24}(A_1^2,(A')^2)$ holds.
Finally, take new general matrices $K_1,K_2,K_3,K_4$ with $\partial K_1=\partial
K_2=A_1$ and $\partial K_3=\partial K_4=A'$, so that, by $M_{24}(A_1^2,(A')^2)$, the ideal
$I_3(K_1)+\dots +I_3(K_4)$ coincides with $R'$ in degree $24$.
Since $\partial K_3=A'$ and $T(A')=24$, the multiplication by a general 
linear form determines an isomorphism $(R'/I_3(K_3))_{24}\to (R'/I_3(K_3))_{25}$
and consequently a surjection $S_{24}\to S_{25}$, where $S=R'/(I_3(K_3)+I_3(K_4))$.
Take a general linear form $Y$. Call  $K_1^Y$ (resp. $K_2^Y$) 
the matrix obtained by multiplying the second row of $K_1$ (resp. $K_2$) by $Y$.
Since $I_3(K^Y_1)+I_3(K_2^Y)=Y(I_3(K_1)+I_3(K_2))$, then
$I_3(K^Y_1)+I_3(K^Y_2)+I_3(K_3)+I_3(K_4)$ coincides with $R'$ in degree $25$.
Notice that $\partial K_1^Y=\partial K^Y_2=\partial K_3=\partial K_4=A'$.
Thus, by semicontinuity, $M_{25}((A')^4)$ holds.
\end{ex}

\section{The proof of the main Theorem}
 
 Let  $R=\CC[x,y,z,t]$.  The technical Lemma in the previous section gives us
  a Lefschetz-type property for certain quotients of $R$.
 
 \begin{prop} \label{Lef}
Let $A'=(a_{ij})$ be a homogeneous $(k-1)\times k$ matrix 
of non--negative integers. 
Let $G_1,\dots , G_k$ be a general choice of matrices of quaternary forms such that
$\partial G_i=A'$ for all $i$. 

If $J$ is the ideal generated by
all the maximal minors of the $G_i$'s, then the multiplication map
$(R/J)_{n-1}\to (R/J)_n$ by a general linear form is surjective 
when $n\geq T(A')$. 
 \end{prop}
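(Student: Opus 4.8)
The plan is to reduce this four-variable statement to the three-variable result of Lemma~\ref{tech} by cutting with the linear form itself. First I would translate surjectivity into a vanishing: for $\ell\in R_1$, the map $(R/J)_{n-1}\xrightarrow{\,\cdot\,\ell\,}(R/J)_n$ is surjective exactly when its cokernel $(R/(J+\ell R))_n$ vanishes, since $\ell\cdot(R/J)_{n-1}$ is the image of $(\ell R+J)_n$ in $R_n/J_n$. So the Proposition is equivalent to $(R/(J+\ell R))_n=0$ for all $n\ge T(A')$.

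Because $\ell$ is general, a linear change of coordinates (applied simultaneously to $\ell$ and to the $G_i$, which keeps them general) lets me assume $\ell=t$, so that $R/tR\cong\CC[x,y,z]=R'$. Reduction modulo $t$ is a ring homomorphism sending each entry of $G_i$ to its restriction on $t=0$; writing $\bar G_i$ for the resulting $(k-1)\times k$ matrix of ternary forms, still with degree matrix $A'$, it carries the maximal minors of $G_i$ to those of $\bar G_i$. Hence $R/(J+tR)\cong R'/\bar J$, where $\bar J=I_{k-1}(\bar G_1)+\cdots+I_{k-1}(\bar G_k)$, and I am reduced to proving $(R'/\bar J)_n=0$ for $n\ge T(A')$.

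Here I would observe that a single degree suffices: since $J+tR$ is an ideal of $R$, once $(J+tR)_{T(A')}=R_{T(A')}$ one gets $(J+tR)_n\supseteq R_{\,n-T(A')}\,(J+tR)_{T(A')}=R_n$ for every $n\ge T(A')$, so the vanishing propagates upward automatically. Thus it is enough to show $(R'/\bar J)_{T(A')}=0$. Now for a general choice of quaternary $G_i$ the restrictions $\bar G_i$ form a general system of ternary matrices with degree matrix $A'$ (the restriction map on entries is a dominant linear surjection, so it carries a general point into the dense open ``good'' locus of Lemma~\ref{tech}); equivalently, one may simply exhibit the example in which the $\bar G_i$ are chosen general and lifted to $G_i$ involving only $x,y,z$, and then invoke semicontinuity. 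Either way condition $M_{T(A')}((A')^k)$ of Lemma~\ref{tech} says precisely that $\bar J$ coincides with $R'$ in degree $T(A')$, i.e. $(R'/\bar J)_{T(A')}=0$, which is what we wanted.

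The heart of the argument is entirely contained in Lemma~\ref{tech}; what remains here is bookkeeping, and the one point to handle with care is the interplay of the two generalities. I would guard against it by the semicontinuity route: proving the open condition $(J+\ell R)_{T(A')}=R_{T(A')}$ at the single explicit point $(\,\bar G_i\text{ lifted},\ \ell=t\,)$ and letting openness spread it to a general choice of the $G_i$ and of $\ell$. The collapse from the infinitely many conditions $n\ge T(A')$ to the single degree $T(A')$, via the ideal property of $J+\ell R$, is what makes this a legitimate appeal to semicontinuity on one nonempty open locus rather than on a countable intersection.
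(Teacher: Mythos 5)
Your proof is correct and takes essentially the same route as the paper's: reduce modulo the linear form (the paper uses the coordinate $x$, you use $t$ after a change of coordinates), observe that the residues $\bar G_i$ are general ternary matrices so that Lemma~\ref{tech} gives $(R'/\bar J)_{T(A')}=0$, and conclude surjectivity of multiplication by the linear form. Your additional bookkeeping --- the cokernel reformulation, the propagation from degree $T(A')$ to all $n\geq T(A')$ via the ideal property, and the semicontinuity handling of the two generalities --- only makes explicit what the paper's terser proof leaves implicit.
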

\begin{proof}
The proof follows immediately from Lemma \ref{tech}. Indeed,
 since condition $M_{T(A')}((A')^k)$ holds,  the residues of the matrices
$G_i$'s in $R'=R/\langle x\rangle=\CC[y,z,t]$ have maximal minors 
which generate $R'$ in all degrees $n\geq T(A')$. Thus, modulo $J$, every element
in $(R/J)_n$ is divisible by $x$. Hence,  multiplication
by $x$ surjects onto $(R/J)_n$.
\end{proof}

We have all the ingredients to prove the main result,
which we recall here:

\begin{thm}\label{main} 
Let $A$ be a homogeneous $k\times k$ matrix of \emph{non--negative} 
integers, with degree $d$. Then a general form of degree $d$ in $4$
variables is the sum of $k$ determinants of matrices of forms,
with degree matrix $A$. 
\end{thm}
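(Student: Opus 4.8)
The plan is to translate the statement into the ideal-theoretic form of Remark \ref{reduc} and then prove the resulting statement by induction on the degree $d=tr(A)$, with Proposition \ref{Lef} as the lifting engine. By Remark \ref{reduc} it suffices to show that for a general choice of $k$ matrices $M_1,\dots,M_k$ of quaternary forms with $\partial M_i=A$, the ideal $J$ generated by all their submaximal minors satisfies $J_d=R_d$; by semicontinuity it is enough to exhibit one admissible family with this property, and I note that once $J_m=R_m$ for some $m$ one automatically has $J_n=R_n$ for all $n\geq m$. Since $A$ is homogeneous I write $a_{ij}=r_i+c_j$ and, after permuting rows and columns, assume it ordered, so $r_1\geq\cdots\geq r_k$ and $c_1\leq\cdots\leq c_k$. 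For the base case $d=0$ a non-negative homogeneous matrix of trace $0$ must be the zero matrix, and then any nonzero constant is trivially a sum of $k$ determinants of matrices of constants.

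For the inductive step assume $d>0$. A short computation shows $a_{11}$ and $a_{kk}$ cannot both vanish; after reversing the orders of the rows and columns if necessary (which does not change the problem) I assume $a_{kk}>0$, so the last column of $A$ has all entries $\geq 1$. Let $\tilde A$ be $A$ with every entry of the last column decreased by $1$. Then $\tilde A$ is again homogeneous and non-negative, with $tr(\tilde A)=d-1$, so the inductive hypothesis applies to it (after re-ordering). I choose general matrices $\tilde M_1,\dots,\tilde M_k$ with $\partial\tilde M_i=\tilde A$ and let $M_i$ be obtained from $\tilde M_i$ by multiplying its last column by one fixed general linear form $\ell$; then $\partial M_i=A$, so by semicontinuity this degenerate family is admissible.

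The heart of the argument is the splitting of the submaximal minors of $M_i$. A submaximal minor deleting the last column is untouched by the multiplication, so it equals the corresponding minor of $\tilde M_i$; these generate the ideal $J_k$ of all maximal minors of the $k\times(k-1)$ matrices obtained from the $\tilde M_i$ by deleting the last column, whose degree matrix is $A$ with the last column removed. A submaximal minor deleting any other column acquires exactly one factor of $\ell$, hence equals $\ell$ times the corresponding minor of $\tilde M_i$. Writing $J_{\neq k}$ for the ideal generated by the submaximal minors of the $\tilde M_i$ that do not delete the last column, I obtain $J=J_k+\ell\,J_{\neq k}$. Now I work in $S=R/J_k$: the inductive hypothesis at degree $d-1$ says that the submaximal minors of the $\tilde M_i$, i.e. $J_k+J_{\neq k}$, fill $R$ in degree $d-1$, so the image of $J_{\neq k}$ fills $S_{d-1}$. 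By Proposition \ref{Lef}, applied to the transposed $(k-1)\times k$ matrices (which have the same maximal minors as the $k\times(k-1)$ ones), multiplication by $\ell$ is surjective $S_{d-1}\to S_d$ as soon as $d\geq T(\hat A)$, where $\hat A$ is the corresponding ordered $(k-1)\times k$ degree matrix. A direct computation with $a_{ij}=r_i+c_j$ identifies this threshold as $d-(c_k-c_{k-1})\leq d$, so the hypothesis is automatic. Hence the image of $\ell\,J_{\neq k}$ in $S$ contains $\ell\,S_{d-1}=S_d$, which gives $J_d=(J_k+\ell\,J_{\neq k})_d=R_d$ and closes the induction.

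I expect the main obstacle to be exactly this coupling in the inductive step: arranging the degenerate matrices so that the minor ideal splits cleanly as $J=J_k+\ell\,J_{\neq k}$, and checking that the Lefschetz surjectivity supplied by Proposition \ref{Lef} becomes available at precisely the degree $d$ one needs, i.e.\ the threshold computation $T(\hat A)\leq d$. The remaining issues—deciding whether to decrement a column or a row so as to preserve non-negativity and orderedness, and the harmless re-sorting afterwards—are routine bookkeeping, and the case $a_{kk}=0$ (forcing $a_{11}>0$) is handled symmetrically by decrementing the first row instead.
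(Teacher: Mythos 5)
Your proposal is correct and is essentially the paper's own proof, transposed: the paper decrements the first row of the ordered matrix $A$ (after reflecting along the anti-diagonal to ensure $a_{11}>0$), multiplies that row by a general linear form, and invokes the Lefschetz-type surjectivity of Proposition \ref{Lef} with the threshold $T(B')\le d$, which mirrors exactly your last-column argument with threshold $T(\hat A)=d-(c_k-c_{k-1})\le d$, followed by the same semicontinuity step. Your one slip --- ``reversing the orders of the rows and columns'' does not preserve orderedness and so cannot by itself force $a_{kk}>0$ (the correct operation is reflection along the anti-diagonal, i.e.\ transposition plus reordering) --- is harmless, because your closing fallback of decrementing the first row when $a_{kk}=0$ (which forces $a_{11}>0$) is precisely the case the paper's proof handles.
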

\begin{proof}
Let $A=(a_{ij})$ be a square, homogeneous 
$k\times k$ matrix of non--negative integers.

We will prove the Theorem by induction on the degree (= trace) $d$ of $A$.

If $d=0$, then  all the entries of $A$ are $0$ and the Theorem is obvious.

Assume, by induction, that the Theorem holds for all matrices with trace $< d$ and
assume that $A$ has trace $d$. 
We also assume that $A$ is ordered. Then, since $a_{k1}\geq 0$, $a_{1k}+a_{k1} =
a_{11}+a_{kk}$ and $a_{1k}=\max\{a_{ij}\}$, we see that, 
after reflecting $A$ along its anti-diagonal, if necessary, we may also assume $a_{11}>0$.
This is so because the determinant
of a matrix with degree matrix $A$ is equal to the determinant of
any matrix with degree matrix obtained by reflecting across the anti-diagonal
of $A$.

Let $B$ be the matrix obtained from $A$ by subtracting $1$ from the
first row. Then $B=(b_{ij})$ is again a homogeneous matrix 
of non-negative integers, whose trace is $d-1$.  
Thus the Theorem holds for $B$. Hence, by Proposition \ref{tgkxk}
and Remark \ref{reduc},  for a general choice of $k$ matrices of
quaternary forms $M_1,\dots, M_k$, with $\partial M_i=B$,
the ideal generated by all the $(k-1)\times (k-1)$ minors
of the matrices $M_i$'s coincides with $R$ in degree $d-1$.  
Now, if we forget the first rows of the matrices $M_i$,
we get $k$ matrices $G_1,\dots, G_k$, of size $(k-1)\times k$, whose degree
matrix $B'$ equals $B$ with the first row canceled (which is equal to $A$
with the first row canceled).
As $A$ is ordered, $T(B')$  is at most $d$. Thus, by Proposition \ref{Lef}, all the 
$(k-1)\times (k-1)$ minors of $G_1,\dots, G_k$ 
 generate an ideal $J$ such that multiplication by a general linear form $L$
 determines a surjective map $(R/J)_{d-1}\to (R/J)_d$.
Call $M'_i$ the matrix obtained from
$M_i$ by multiplying the first rows by $L$. It follows that
the ideal generated by the $(k-1)\times (k-1)$ minors
of the matrices $M'_1,\dots, M'_k$'s coincides with $R$ in degree $d$.
By semicontinuity, this last property holds for a general
choice of  $k$ matrices $H_1,\dots, H_k$, with $\partial H_i=A$
for all $i$.  

By Proposition \ref{tgkxk} and Remark \ref{reduc}, the Theorem follows.
\end{proof}

\begin{rem}\label{sharprem}
It is very reasonable to ask when the bound given in Theorem 2.2 is  sharp.

As we observed in the Introduction (immediately after formula \eqref{sharp}), 
the bound is sharp when all the entries of the matrix
$A$ are equal to a number $a$, which is sufficiently large with respect to $k$. 
Standard arithmetic shows that,
indeed, the bound is sharp whenever $\min\{a_{ij}\}\gg k$. In all these cases, 
a general form of degree $d=$ tr$(A)$
cannot be written as a sum of determinants of fewer than $k$ matrices of forms, 
with degree matrix $A$.
Here the word ''general'' means that forms requiring less than $k$ summands are 
contained in a (non-trivial) Zariski closed subset of the space of all forms 
of degree $d$.

On the other hand, we will provide, in the next section, examples of degree 
matrices $A$ (with some small entry) and degrees  $d$
such that fewer than $k$ summands are sufficient for general forms of degree $d$.

The problem of finding the  complete range in which our theorem is sharp seems,
at least technically, rather  laborious.
\end{rem}

\section{Improvements and open questions}\label{impro}

In this section we show how the main theorem can sometimes be improved. 
 We also give some open questions on the subject.
\smallskip

Assume that we are dealing with $3\times 3$ matrices of forms in $4$ variables. 
Then Theorem \ref{main} above states that for any homogeneous
matrix $A$ of degree $d$, with non-negative entries, a general form
of degree $d$ is the sum of three determinants of matrices of forms, whose
degree matrix is $A$.

We want to refine this statement and show that when the minimal entry
of $A$ is $1$, then we can write a general form of degree $d$ as the sum
of determinants of two matrices whose degree matrix is $A$. 

We will get the proof by using the Lefschetz property
of Artinian complete intersection rings.

\begin{thm} Let $A$ be a $3\times 3$ homogeneous matrix of non-negative integers,
of degree $d$, whose minimal entry is $1$. Then a general form of degree $d$
in $4$ variables is the sum of the determinants of two matrices of forms,
whose degree matrix is $A$.
\end{thm}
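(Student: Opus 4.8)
My plan is to mimic the Terracini-type reduction behind Theorem \ref{main}, but to replace the input of Proposition \ref{Lef} --- which produces its Lefschetz surjectivity only from $k=3$ matrices --- by the Lefschetz property of an Artinian complete intersection, so that two matrices suffice.

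By Proposition \ref{tgkxk} and Remark \ref{reduc}, the statement is equivalent to the following: for a general pair of $3\times 3$ matrices of quaternary forms $M_1,M_2$ with $\partial M_1=\partial M_2=A$, the ideal $I$ generated by all of their submaximal ($2\times 2$) minors satisfies $I_d=R_d$. First I would normalise $A$: permuting rows and columns I may assume $A$ is ordered, so that its minimal entry is the bottom-left one, and the hypothesis reads $a_{31}=1$. Recall that the submaximal minors of $M_i$ are, up to sign, its cofactors $C^i_{lm}$, and that homogeneity forces $\deg C^i_{lm}=d-a_{lm}$. Thus $a_{31}=1$ yields a cofactor of the top degree $d-1$, so that one multiplication by a general linear form already carries it into degree $d$; this is the numerical slack that will let two matrices do the work of three.

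Since $I_d=R_d$ is an open condition, by semicontinuity it is enough to exhibit a single pair that works, and this is where complete intersections enter. Among the submaximal minors available I would select four forms, of suitable degrees, forming a regular sequence in $R=\CC[x,y,z,t]$, and let $\mathcal C$ be the Artinian complete intersection ideal they generate. A general complete intersection in characteristic zero enjoys the Strong Lefschetz Property --- this follows by specialising to the monomial complete intersection, for which it is the theorem of Stanley and Watanabe, together with the openness of the property --- so multiplication by a general linear form, and by its powers, has maximal rank on $R/\mathcal C$ in every degree. I would then argue that the remaining submaximal minors, multiplied up to degree $d$, surject onto $(R/\mathcal C)_d$; together with $\mathcal C_d$ this gives $I_d=R_d$. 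The hypothesis $a_{31}=1$ is what keeps the socle degree of $R/\mathcal C$, and the thresholds governing these multiplication maps, low enough for the minors coming from only two matrices to suffice --- in contrast with the situation of Example \ref{basic} and Remark \ref{sharprem}, where all entries are large and three summands are genuinely needed.

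The main obstacle is the selection step together with the surjectivity claim: one must exhibit a regular sequence of minors whose degrees give $R/\mathcal C$ the correct socle degree, and then verify the tight Hilbert-function inequality ensuring that the remaining minors fill $(R/\mathcal C)_d$ in degree $d$, using the maximal-rank multiplication maps supplied by the Strong Lefschetz Property. I expect that, exactly as in Lemma \ref{tech}, this will be organised as an auxiliary induction that decreases the entries of $A$ one step at a time while propagating the relevant surjectivity conditions, with the case $a_{31}=1$ providing the base of the numerics.
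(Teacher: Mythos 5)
Your strategy is, in all essential respects, the one the paper follows: the reduction via Proposition \ref{tgkxk} and Remark \ref{reduc} to showing that the $2\times 2$ minors of two general matrices fill $R_d$, the use of an Artinian complete intersection sitting inside the ideal of minors together with its Lefschetz-type surjectivity, and an induction that lowers the first row of $A$ by $1$ and then multiplies that row of the matrices by a general linear form, concluding by semicontinuity. Your ``selection step'' is exactly what the paper makes concrete via Hilbert--Burch: deleting the first row of $A$ leaves a $2\times 3$ matrix $B$ whose maximal minors (which are submaximal minors of the full matrices) cut out an ACM curve lying on a complete intersection of surfaces of degrees $u=a_{22}+1$ and $t=a_{23}+1$; taking such a pair from each of the two matrices yields the regular sequence of type $(u,u,t,t)$, and $a_{31}=1$ is precisely what keeps $(2u+2t-4)/2$ below $d$, as you anticipated.

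The genuine gap is the base of your induction. The case ``$a_{31}=1$'' cannot serve as a base case, because it is the standing hypothesis of the theorem, in force at every step; an induction that keeps the minimal entry equal to $1$ while decreasing the remaining entries necessarily terminates at the all-ones matrix, i.e.\ at $d=3$, and there your complete-intersection numerics do not by themselves yield the statement --- one still has to prove that the quadric minors of two matrices of linear forms fill the relevant quotient, which is a separate verification, not a consequence of the Lefschetz machinery. The paper anchors the induction with a classical, non-inductive input: a general cubic surface in $\PP^3$ contains a twisted cubic, so a general cubic form is the determinant of a \emph{single} $3\times 3$ matrix of linear forms (\cite{G1855}), and the $d=3$ case holds trivially. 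You need either this fact or an explicit check (for instance, that the eighteen quadric minors of two general matrices of linear forms span $R_2$, whence $I_3\supseteq R_1\cdot R_2=R_3$) to get the induction started. A smaller point, which your write-up shares with the paper: the four chosen minors are not general forms of their degrees, so invoking Stanley--Watanabe plus openness for \emph{general} complete intersections strictly requires exhibiting one member of this special family of ideals (e.g.\ by degeneration) that lies in the Lefschetz locus.
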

\begin{proof} Assume that $A=(a_{ij})$ is ordered.
We have $a_{31}=1$. We will prove the statement by induction on the 
biggest entry $a_{13}$ of $A$.

Assume $a_{13}=1$. Then all the entries of $A$ are $1$ and the degree
of $A$ is $3$. It is classical that a general cubic form is the
determinant of a single $3\times 3$ matrix of linear forms, since the corresponding
surface contains a twisted cubic curve (see e.g. \cite{G1855}). Thus the statement
trivially holds in this case.

Assume now $a_{13}>1$, so that $a_{11}+a_{33}=a_{31}+a_{13}>2$.
After taking the reflection along the anti-diagonal, we may assume that $a_{11}>1$,
so that all the entries in the first row are bigger than $1$.
Assume that the statement is true for all matrices with trace smaller than the trace
$d$ of $A$.

Let $B$ be the matrix obtained by erasing the first row of $A$.
By the Hilbert-Burch Theorem, the $3$ minors of a general matrix of forms,
with degree matrix $B$, vanish along an arithmetically Cohen-Macaulay
curve $C$, which is contained in a complete intersection of surfaces of degrees
$u= a_{22}+1=a_{21}+a_{32}$ and $t=1+a_{32}= a_{21}+a_{33}$. 
 Thus, if $F_1,F_2$ are two general matrices of forms,
with degree matrix equal to $B$, then the ideal $J$ generated by the
$2\times 2$ minors of $F_1,F_2$ is a quotient of a complete intersection artinian ideal
generated by forms of degrees $u,u,t,t$. Since $(u+u+t+t-4)/2= a_{22}+a_{32}$
which is at most equal to $a_{22}+a_{33}$, it follows that for $n\geq a_{21}+a_{22}+a_{23}$
the multiplication map by a general linear form $(R/J)_{n-1}\to (R/J)_n$ surjects.

Now, let $A'$ be the $3\times 3$ matrix obtained from $A$ by decreasing
the first row by $1$ and reordering (if necessary). Then $A'$ satisfies the inductive 
hypothesis, for its degree is smaller than $d$. Thus the statement
holds for $A'$. In particular, by Remark \ref{reduc}, 
if we take two general matrices $G_1,G_2$ of forms, with
degree matrix $A'$, then their $2\times 2$ minors generate the ring $R$ in degree $d-1$.
Moreover, if $J$ is the ideal generated by the $2\times 2$ minors obtained after
deleting the first row in both $G_1,G_2$, then the multiplication by a general
linear form $(R/J)_{d-1}\to (R/J)_d$ surjects.
Thus, by multiplying the first row of both $G_1,G_2$ by a general linear
form, we get two matrices whose degree matrix is $A$ and whose $2\times 2$ minors
generate $R$ in degree $d$. The statement follows from Remark \ref{reduc}. 
\end{proof}

\begin{rem}\label{222} For a specific $k\times k$ matrix $A$ containing small 
positive integers one can check directly, 
with the aid of the Computer Algebra package \cite{DGPS}, 
if the $k-1\times k-1$ minors of $k_0<k$ general matrices of
forms, whose degree matrix is
$A$, are sufficient to generate the polynomial ring $R$ in degree equal to
the trace of $A$.

With this procedure, one can prove for instance that a general form of degree $6$ in
$4$ variables is the sum of two determinants of $3\times 3$ matrices of forms, all of whose 
entries have degree $2$.
\end{rem}

We see then that for some specific homogeneous matrices of non-negative integers $A=(a_{ij})$, 
the minimal number $s(A)$ of determinants of $k\times k$ matrices of
forms with fixed degree matrix $A$, which are necessary to write a general form in $4$
variables of degree $=tr(A)$, can be smaller than $k$. We show how one can produce a sharp conjecture 
for $s(A)$, at last when all the entries of $A$ are positive,
by making more precise the construction already outlined in Example \ref{basic}.
\smallskip

The matrix $A'$ obtained from $A$ by erasing the first row is the
degree Hilbert-Burch matrix of arithmetically normal curves in $\PP^3$, which
fill a dense open subset of an irreducible component $Hilb(A')$ of the Hilbert scheme. 
The dimension of $Hilb(A')$ can be computed from the entries of $A'$.
See  \cite{E75}, Theorem 2.

Now one can construct the incidence variety:
$$ Z=\{(F,C): C\in Hilb(A') \mbox{ and $F$ a surface of degree $d$ containing}\ C\}.$$
The fiber of the projection $Z\to Hilb(A')$ over $C$ is equal to $\PP(H^0(I_C(d))$.
These are projective spaces of the same dimension independent of $C$. One can
easily compute this dimension from the  resolution induced by $A'$
 
\begin{equation}\label{sequ}
 0\to \oplus^{k-1} O(-b_j)\to \oplus^k O(-a_i)\to I_C\to 0.
 \end{equation}
  In particular, $Z$ is irreducible
and one can compute the dimension of $Z$ as a function of $d$ and the entries of $A'$.

Call $V(A)$ the closure of the image of the projection of $Z$ to the space
$\PP^{N_d}$ which parametrizes surfaces of degree $d$ in $\PP^3$.
Recall that $N_d=\binom{d+3}d -1$.
$V(A)$ is exactly the closure of the locus  of surfaces of degree
$d=\deg(A)$, containing a curve $C\in Hilb(A')$,
i.e. it is the locus of those surfaces whose equation is the determinant of a single
matrix of forms $G$, with $\partial G =A$.

The closure of the set of forms which are the sum of $s$ determinants of matrices
$G_1,\dots, G_s$ with $\partial G_i=A$ for all $i$, corresponds to the $s$-th
secant variety of $V(A)$.

The expected value for the dimension of the $s$-th secant variety of $V(A)$
is equal to the minimum between $s\dim(V(A))+s-1$ and the dimension of the
whole space $N_d$. In particular, as soon as $s\dim(V(A))+s-1$ is bigger than or
equal to $N_d$, i.e. as soon as 
$$ s\geq \frac{\binom{d+3}3}{\dim(V(A)) +1}\ $$
then one expects the $s$-th secant variety of $V(A)$ to fill $\PP^{N_d}$.
This means that a general form of degree $d$ should be the sum of $s$ determinants
of matrices $G_1,\dots, G_s$ with $\partial G_i=A$ for all $i$.

When the dimension of the $s$-th secant variety of $V(A)$ is different from
the expected value, then $V(A)$ is said to be {\it $s$-defective}.
Thus one should consider the following problem:

\noindent
{\bf Problem.} Are there homogeneous matrices $A$ of non-negative integers
such that the corresponding variety $V(A)$ is defective? Can one classify them?
\medskip

If one believes that $V(A)$ is not defective, then there is a conjecture
for the minimal integer $s(A)$ such that a general form of degree $d=\deg(A)$
can be written as the sum of $s(A)$ determinants
of matrices  with degree matrix $A$.

Of course, the conjectured bound depends on the dimension of $V(A)$.
Clearly, $V(A)$ is irreducible and one can compute its dimension once one knows
the dimension of a general fiber of the projection $Z\to V(A)$.
For a general choice of $C\in Hilb(A')$ we know that $C$ is a smooth curve
(see \cite{S85}). Since $d$ is bigger than the degree of a maximal generator of 
$I_C$ by Bertini we know that a general $F\in V(A)$ is a smooth
surface in $\PP^3$, hence is regular. It follows that the fiber
of $Z\to V(A)$ over a general point $F$ is given by the union of
a finite number of linear systems on $F$, each of them composed
of arithmetically Cohen-Macaulay curves with the same Betti numbers as $C$.
It follows that for $(F,C)\in Z$ general, the dimension of the fiber
equals the dimension of the space of sections of
the normal bundle $N_{C|F}$ of $C$ {\it in $F$}, which is equal to the dimension
of the linear  system $L_C$ on $F$ that contains the divisor $C$.

This last dimension can be  obtained as follows:
take a general surface $F'$ of minimal degree $a$ passing through $C$. The residue $C'$
of $C$ in the intersection $F\cap F'$ is also an arithmetically Cohen-Macaulay
curve. Every curve which is in the
linear system of $C$ is directly linked to $C'$ by a complete intersection
of type $d,a$ with $a<d$. From this we see that the dimension of the linear system $L_C$
equals the dimension of the space of surfaces of degree $a$ passing through $C'$.
This last number can be computed, since one can compute
a minimal resolution for the ideal sheaf of $C'$, via the
mapping cone procedure (see the description on page 4 of \cite{M98}).

Summing up, we obtain a conjectured number for $s(A)$.

\begin{ex} 
Let us compute the conjectured value for $s(A)$ when $A$ is a $k\times k$ matrix of
linear forms.  We will assume that $V(A)$ is not defective. Notice that $\deg(A)=k$ in this case.

Let $C$ be an arithmetically Cohen-Macaulay curve as above.
The minimal resolution of $I_C$ looks like
$$
 0\to  O^{k-1}(-k)\to \oplus^k O(-k+1)\to I_C\to 0
$$
from which one gets $\dim(Hilb(A'))=2k^2-2k$, which is equal to $4\deg(C)$.
Then one easily computes that the dimension of a general fiber of $Z\to V(A)$ is
equal to $h^0(I_C(k))-1= 3k$, so that $\dim(Z)=2k^2+k$. 

We now compute, for $(F,C)$ general in $Z$, the dimension of the linear system
$L_C$ on $F$, which contains $C$.
Consider  the residue $C'$ of $C$ in the intersection $F\cap F'$, where $F'$ is a surface of
minimal degree $k-1$ passing through $C$. $C'$ is an arithmetically Cohen-Macaulay
curve whose resolution, computed via the mapping cone, is equal to
the resolution of $C$. Thus, the space of surfaces of degree $k-1$ through $C'$
has dimension $k-1$. 

We obtain $\dim(V(A))= 2k^2+1$.
\end{ex}

The computation in the previous example yields the following
\medskip

\noindent
{\bf Conjecture.} A general form of degree $k$ in $4$ variables
is the sum of 
$$ s = \lceil \frac k{12} +\frac 12 +\frac{10k}{12k^2+12} \rceil$$
determinants of $k\times k$ matrices of linear forms.
\medskip

We checked this Conjecture, using a computer aided procedure 
with the package \cite{DGPS}, 
for some initial values of $k$.

\begin{rem} Notice that in the case of matrices of linear forms, the conjectured
minimal number of determinants needed for writing a general form
of degree $k$, is always smaller than $k$.
\end{rem}

\begin{ex}\label{3.5}
Let us perform the previous computation for a $3\times 3$  degree matrix $A$ with
all entries equal to $a$. Assuming that $V(A)$ is not defective and following 
the standard procedure we outlined above, we see that
the dimension of $V(A)$ is  
$$\dim(V(A))= \frac{9a^3+54a^2+99a-48}6$$
while the space of forms of degree $3a$ in $\PP^3$ has dimension
$$ \theta(3a)=\frac{27a^3+54a^2+33a}6.$$
In particular, notice that $\theta(3a)=\dim(V(A))$ when $a=1$, 
consistent with the fact that $V(A)$ is the space of all cubic surfaces.
Indeed a general cubic surface contains a twisted cubic curve and consequently
a general cubic form is the determinant of a $3\times 3$ matrix
of linear forms.

When $a=2,\dots,8$, the quotient $\theta(3a)/(\dim(V(A))+1)$ sits between
$1$ and $2$. It could happen that in these cases the general form of
degree $3a$ is the sum of two determinants of $3\times 3$ matrices of
forms of degree $a$. Remark \ref{222} shows that this does indeed happen for $a=2$.
Using the same procedure, we checked that in all the cases
$a=3,\dots,8$, a general form of degree $3a$ is the sum of two
determinants of $3\times 3$ matrices of forms of degree $a$.

For $a>8$, the quotient $\theta(3a)/(\dim(V(A))+1)$ sits between
$2$ and $3$. In these cases, at least $3$ determinants are needed for
obtaining a general form of degree $3a$. However, our Main Theorem
shows that $3$ determinants are always sufficient. Thus, the example shows that
our Main Theorem is sharp.

A similar computation, for the case of a $k\times k$ degree
matrix $A$ with all entries equal to $a$ and with $a\gg k$, shows that the
minimal  number $s(A)$ of determinants required to obtain
a general form of degree $ka$ cannot be smaller than 
$k$ and hence, by our Main Theorem, must be equal to $k$.
\end{ex}

\begin{rem} One might ask: What happens when some entry of the degree matrix $A=(a_{ij})$
is negative?

If $G$ is a matrix of forms with $\partial G=A$ and $a_{ij}<0$
for some $i,j$, then necessarily the corresponding entry $g_{ij}$ of $G$ is
the $0$ polynomial.

Assume that $A$ is ordered and $a_{ii}<0$ for some $i$. Then 
$a_{ij}=a_{ji}=0$ for all $j\leq i$. It follows that any such matrix
of forms $G$ with degree matrix $A$ has a block of zeroes
which touches the main diagonal. Consequently, $det(G)=0$. In
particular, no non-zero forms can be the sum of any number
of determinants of matrices $G$ with $\partial G=A$.

When the $a_{ii}$'s  are all non-negative but still there exists some entry
$a_{ij}<0$ (so that $j<i$ when $A$ is ordered), the question about
the minimal number of determinants of matrices of forms $G$ with $\partial G=A$,
which are necessary to express a general form of degree $\deg(A)$,
is still open.
\end{rem}

\section{Extension to a larger number of variables.}\label{morevar}

When the number of variables increases we can find 
similar results on the number of determinants that one needs
in order to express a general form. Unfortunately the required
number of determinants grows exponentially.

Indeed, as we noted in the introduction, for a fixed $k\times k$ homogeneous
matrix $A$ of non-negative integers, the question amounts to asking
for the minimal $s$ such that for a general choice of  matrices
$G_1,\dots, G_s$ with degree matrix $A$, the ideal $I$ generated
by all the $(k-1)\times (k-1)$ (submaximal) minors of the $G_i$'s 
coincides with the polynomial ring $R=\mathbb C[x_1,\dots, x_n]$ in degree $d=\deg(A)$.

Assume that all the entries of $A$ are equal to $a$, so that $d=ka$. 
Since $A$ has $k^2$ submaximal minors, then 
$$\dim I_d\leq k^2s \dim R_a =\frac{ k^2s a^{n-1}}{(n-1)!} + o(a^{n-1})$$
while the dimension of $R_d$ is $a^{n-1}k^{n-1}/(n-1)!+o(a^{n-1})$.

So, it is immediate to see that, at least when $a$ grows, in order to have
 $I_d=R_d$ then $s$ must be asymptotically equal to $k^{n-3}$.

With a procedure which is similar to the proof of the Main Theorem
(but with a much heavier notation!), and using induction on the number
of variables, we can prove:

\begin{thm}\label{mainGen} 
Let $A$ be a homogeneous $k\times k$ matrix of \emph{non--negative} 
integers, with degree $d$. Then a general form of degree $d$ in $n\geq 3$
variables is the sum of $k^{n-3}$ determinants of matrices of forms,
with degree matrix $A$. 
\end{thm}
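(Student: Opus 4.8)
The plan is to prove the theorem by induction on the number of variables $n$, propagating two statements in tandem. For $n\geq 3$ write $P(n)$ for the conclusion of the theorem itself (a general form of degree $d$ in $n$ variables is a sum of $k^{n-3}$ determinants of $k\times k$ matrices with a prescribed homogeneous non-negative degree matrix $A$), and write $Q(n)$ for its $(k-1)\times k$ companion: for a general choice of $k^{n-2}$ matrices of forms in $n$ variables, all with a fixed ordered homogeneous non-negative $(k-1)\times k$ degree matrix $A'$, the sum of the ideals generated by their maximal minors equals $R$ in degree $T(A')$, hence in every degree $\geq T(A')$. The exponents are exactly those forced by the dimension count of the Introduction: balancing $k^2 s\sim k^{n-1}$ gives $s\sim k^{n-3}$ for the square problem, and passing from $k\times k$ to $(k-1)\times k$ costs one extra factor of $k$. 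The induction is anchored at $Q(3)$, which is precisely Lemma \ref{tech} (and which already encapsulates the three-variable input of \cite{CM12}); at each stage I assume $Q(n-1)$ and deduce, in order, a Lefschetz statement in $n$ variables, then $P(n)$, then $Q(n)$. In particular the step $n=4$ reproves Theorem \ref{main} and supplies $Q(4)$, which launches the higher cases.

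First I would upgrade Proposition \ref{Lef} to $n$ variables: if $G_1,\dots,G_{k^{n-3}}$ are general matrices of forms in $n$ variables with common $(k-1)\times k$ degree matrix $A'$ and $J$ is the ideal of all their maximal minors, then multiplication by a general linear form is surjective on $R/J$ in degrees $\geq T(A')$. The proof is identical to that of Proposition \ref{Lef}: reduce modulo a general linear form to $n-1$ variables and invoke $Q(n-1)$ --- whose count $k^{(n-1)-2}=k^{n-3}$ is exactly the number of matrices at hand --- to see that the residual minors already fill the smaller ring, so every class in the top degree is divisible by that linear form. With this in place, $P(n)$ follows by a secondary induction on the degree $d=tr(A)$ exactly as in the proof of Theorem \ref{main}: the case $d=0$ is trivial; otherwise order $A$, reflect along the anti-diagonal so that $a_{11}>0$, subtract $1$ from the first row to obtain $B$ with $tr(B)=d-1$, apply the degree-inductive hypothesis to the $k^{n-3}$ matrices $M_i$ with $\partial M_i=B$, delete their first rows to produce $k^{n-3}$ matrices $G_i$ of type $(k-1)\times k$ whose degree matrix $B'$ satisfies $T(B')\leq d$, and finally multiply the deleted first rows back by a general linear form $L$, using the $n$-variable form of Proposition \ref{Lef} to conclude that the submaximal minors of the resulting matrices fill $R$ in degree $d$. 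Proposition \ref{tgkxk} and Remark \ref{reduc} then give $P(n)$.

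Next I would derive $Q(n)$ from $P(n)$, mirroring the proof of Lemma \ref{tech}. When all rows of $A'$ are equal, adjoin one more identical row to get a square homogeneous matrix $A$ with $tr(A)=T(A')$; by $P(n)$ the submaximal minors of $k^{n-3}$ general matrices with degree matrix $A$ fill $R$ in degree $T(A')$, and since the submaximal minors of a $k\times k$ matrix are the maximal minors of the $k$ matrices obtained by deleting one row at a time --- each of degree matrix $A'$ --- we obtain $k\cdot k^{n-3}=k^{n-2}$ matrices of type $(k-1)\times k$ with the required property, whence $Q(n)$ by semicontinuity. For general $A'$ I would follow the diameter induction of Lemma \ref{tech}, keeping the same intermediate matrices $A''$, $A_0,\dots,A_m$ and the same values $T(A_j)$, and promoting degree matrices one row at a time by multiplication by a general linear form and passage to quotients, with the single essential modification described next.

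The hard part, and the only place where the three-variable argument genuinely has to be reorganized, is this diameter induction. In Lemma \ref{tech} each Lefschetz step is applied to $R'/I_{k-1}(G)$ for a \emph{single} $(k-1)\times k$ matrix $G$; this is legitimate in three variables precisely because the maximal minors of $G$ cut out a finite set of points in $\PP^2$, so by Remark \ref{HB} a general line misses them and multiplication by a general linear form is eventually surjective. For $n\geq 4$ a single $(k-1)\times k$ matrix cuts out a positive-dimensional arithmetically Cohen--Macaulay subscheme of codimension $2$, whose coordinate ring has strictly increasing Hilbert function, so multiplication by a general linear form is never eventually surjective and the single-matrix step simply fails. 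The remedy is to replace each \emph{single} matrix in the scheme of Lemma \ref{tech} by a \emph{group} of $k^{n-3}$ matrices with a common degree matrix: by the $n$-variable Proposition \ref{Lef} just established (equivalently, by $Q(n-1)$) the minors of one such group already fill $R$ modulo a general linear form, which restores exactly the surjection that Remark \ref{HB} provided in the one-dimensional case. The numerology is forced and self-consistent --- the group size $k^{n-3}$ is the matrix count of the $n$-variable Proposition \ref{Lef}, $k$ groups make up the $k^{n-2}$ matrices of $Q(n)$, and this is one factor of $k$ beyond the $k^{n-3}$ determinants of $P(n)$ --- but tracking which row of which matrix in each group is multiplied by $L$ at each of the $m$ promotion steps is the source of the much heavier notation foreseen in the remark preceding the statement.
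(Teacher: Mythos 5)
Your proposal is correct and takes essentially the same route as the paper: the same outer induction on the number of variables, built from the same three ingredients --- the $(k-1)\times k$ ``filling'' statement generalizing Lemma \ref{tech} via the diameter induction with layers of $k^{n-3}$ matrices replacing single matrices, the $n$-variable Lefschetz property generalizing Proposition \ref{Lef} by reduction modulo a general linear form, and the degree induction of Theorem \ref{main} --- with exactly the paper's numerology. The only difference is bookkeeping: you carry the companion statement $Q(n)$ explicitly and derive it after $P(n)$ at each stage, while the paper establishes the corresponding statement one variable down (its ``first step,'' your $Q(n-1)$) before proving $P(n)$; in substance it is the identical induction, and your explicit two-statement formulation in fact makes the paper's slightly terse recursion easier to verify.
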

\begin{proof} We make induction on the number $n$ of variables.
The case $n=3$ is the main result in \cite{CM12}, while the case
$n=4$ is Theorem \ref{main} above.

Assume the Theorem is true for forms in $n-1$ variables. We show how the
argument of Lemma \ref{tech}, Proposition \ref{Lef} and Theorem \ref{main} can be modified,
to provide a proof of the statement for forms in $n$ variables.

Fix the matrix $A$ and assume it is ordered. Call $d$ the degree of $A$ 
Forgetting the first row of $A$, we obtain a $(k-1)\times k$ non-negative 
integer matrix $A'$, with $T(A')\leq d$.

The first step consists in proving that given a general set of $k^{n-3}$ matrices
of forms $G_1,\dots, G_{k^{n-3}}$ in the ring $R'=\CC[x_1,\dots, x_{n-1}]$
with $n-1$ variables, with $\partial G_i=A'$ for all $i$, the ideal generated
by all the minors of the $G_i$'s coincides with $R'$ in degree $T(A')$.
This is true by the inductive assumption, when all the rows of $A$ are equal.
Indeed, a special instance of the $G_i$'s can be obtained by taking $k^{n-4}$ matrices
$H_1,\dots, H_{k^{n-4}}$ of forms in $R'$, with $\partial H_i=A$ for all $i$,
and taking the $G_i$'s equal to the matrices obtained by erasing one line from
the $H_j$'s, in all possible ways.  When the rows of $A$ are different, we make
induction on the diameter of $A$, exactly as in the proof of Lemma
\ref{tech}, with the unique difference that one passes from condition  
$M_{T(A'')+j-1}(A_{j-2}^{k^{n-4}(j-1)}, A_{j-1}^{k^{n-4}(k-j+1)})$ 
to condition $M_{T(A'')+j}(A_{j-1}^{k^{n-4}j},A_j^{k^{n-4}(k-j)})$
and to do that one fixes a layer of $k^{n-4}$ matrices and multiplies
one row of the remaining matrices by a general linear form.

The second step consists of the observation that now the same argument as in
the proof of Proposition \ref{Lef} shows that for a general choice
of $k^{n-3}$ matrices $M_i$'s of forms in $R=\CC[x_1,\dots, x_n]$,
with size $(k-1)\times k$ and degree matrix $A'$, the ideal $J$ generated
by all the $(k-1)\times (k-1)$ minors of the $M_i$'s has the property
that the multiplication by a general linear form gives a 
surjective map $(R/J)_{d-1}\to (R/J)_d$.

Finally, as in the proof of the Main Theorem, one uses
 induction on the degree $d$ to show that when $B$ is the matrix obtained 
from $A$ by subtracting $1$ from the
first row, then  for a general choice of $k^{n-3}$ matrices $M_1,\dots, M_{k^{n-3}}$ of
 forms in $R$, with $\partial M_i=B$,
the ideal generated by all the $(k-1)\times (k-1)$ minors
of the matrices $M_i$ coincides with $R$ in degree $d-1$.  
Moreover, by forgetting the first rows of the matrices $M_i$,
we get $k$ matrices $G_1,\dots, G_k$, of size $(k-1)\times k$, whose degree
matrix $B'$ satisfies  $T(B')\leq d$. Thus, by the surjectivity proved above, all the 
$(k-1)\times (k-1)$ minors of $G_1,\dots, G_k$ 
 generate an ideal $J$ such that the multiplication by a general linear form $L$
 determines a surjective map $(R/J)_{d-1}\to (R/J)_d$.
Then, call $M'_i$ the matrix obtained from
$M_i$ by multiplying the first row by $L$. It follows that
the ideal generated by the $(k-1)\times (k-1)$ minors
of the matrices $M'_1,\dots, M'_{k^{n-3}}$ coincides with $R$ in degree $d$.
By semicontinuity, this last property holds for a general
choice of  $k^{n-3}$ matrices $H_i$, with $\partial H_i=A$ for all $i$.  

The Theorem follows by Proposition \ref{tgkxk} and Remark \ref{reduc}.
\end{proof}

\begin{rem} As we observed in the statement of the previous Theorem,
$k^{n-3}$ is almost always a sharp bound.

With an argument analogous to the discussion in Example \ref{3.5} 
one can show that the bound  of Theorem \ref{mainGen} is sharp when the 
$k\times k$ degree matrix has all entries equal to a positive integer $a$ and $a\gg k$.
In these cases, forms that can be written as the sum of fewer  than 
$k^{n-3}$ determinants are contained in a (non-trivial)
Zariski closed subset of the space of all forms of degree $d$.\par

For example, when $A$ is a $4\times 4$ matrix with entries equal to $a\gg 4$, working
 in five variables, it follows that
one needs $16$ determinants in order to express a general form of degree $4a$.
\end{rem}

\end{document}